\def\h{{\hspace{-0.2cm}}}
\newtheorem{thm}{Theorem}
\newtheorem{cor}{Corollary}
\newtheorem{alg}{Algorithm}
\numberwithin{equation}{section}
\newcommand{\norm}[1]{ \left\| { #1 } \right\| }
\newcommand{\matt}[4]{{\left[ {\begin{array}{*{20}{c}}{#1} & {#2}\\{#3 }&{ #4}\end{array}} \right]} }
\newcommand{\vect}[2]{{\left[ {\begin{array}{*{20}{c}}{#1} \\{#2}\end{array}} \right]} }
\journal{Numerical Algorithms}
\begin{document}

\begin{frontmatter}



\title{A new relaxed HSS preconditioner for saddle point problems}


\author[Guilan]{Davod khojasteh Salkuyeh}\ead{khojasteh@guilan.ac.ir} \author[Guilan]{Mohsen Masoudi}\ead{masoudi\_mohsen@phd.guilan.ac.ir}

\address[Guilan]{Faculty of Mathematical Sciences, University of  Guilan,  Rasht, Iran}

\address{}

\begin{abstract}

We present a preconditioner for saddle point problems. The proposed preconditioner is extracted from a stationary iterative method which is convergent under a mild condition. Some properties of the preconditioner as well as the eigenvalues distribution of the preconditioned matrix are presented.
The preconditioned system is solved by a Krylov subspace method like restarted GMRES.
Finally, some numerical experiments on test problems arisen from finite element discretization of
the Stokes problem are given to show the effectiveness of the preconditioner.
\end{abstract}

\begin{keyword}
Saddle point problems\sep   HSS preconditioner \sep Preconditioning \sep Krylov subspace method \sep GMRES.

\MSC[2010]  65F08 \sep 65F10.
\end{keyword}

\end{frontmatter}


\section{Introduction}
\label{se1}
We study  the solution of the system of linear equations with the following block $2 \times 2$ structure
\begin{equation}
\label{saddle}
\mathcal{A}u= \matt{A}{B^T}{-B}{0}
\vect{x}{y}=\vect{f}{g}\equiv b,
\end{equation}
where $A \in \mathbb{R}^{n\times n}$ is a symmetric positive definite   matrix, $B \in  \mathbb{R}^{m \times n} $ with $rank(B)=m<n$. In addition, $x,f \in \mathbb {R}^n$, and $y,g \in \mathbb{R}^m$. We also assume that the matrices $A$ and $B$  are large and sparse. According to Lemma 1.1 in \cite{benzi 26 2004} the matrix $\mathcal A$ is nonsingular. Such systems
are called saddle point problems and appear in a variety of scientific and engineering problems; e.g., computational fluid dynamics, constrained optimization, etc. The readers are referred to \cite{bai 75 2006, benzi 14 2005} for more discussion on this subject.

 Several efficient iterative methods have been proposed during the recent decades to solve the saddle point problems \eqref{saddle}, such as SOR-like method \cite{golub 55 2001},  modified block SSOR iteration \cite{bai 10 1999,  bai 103 2001}, generalized SOR method \cite{bai 102 2005}, Uzawa method \cite{saad 2003}, parametrized inexact Uzawa methods \cite{bai 428 2008}, Hermitian and skew-Hermitian splitting (HSS) iteration methods \cite{bai 27 2007, bai 98 2004, bai 24 2003} and so on. However, in some situations, these iterative methods may be less efficient than the Krylov subspace methods \cite{saad 2003}. On the other hand, when  Krylov subspace methods are  applied to the saddle point problem \eqref{saddle}, tend to converge slowly.  But these methods can produce suitable preconditioners for accelerating the rate of convergence of the Krylov subspace methods.  In general, favourable rates of convergence of Krylov subspace methods are often associated with a clustering of most of the eigenvalues of preconditioned matrices around 1 and away from zero \cite{BenziSurvey}.
 In view of this, many preconditioners have been presented in literature, e.g.,  block diagonal preconditioners \cite{ murphy 21 2000, sturler 26 2005},  constraint preconditioners \cite{bai 31 2009, keller 21 2000},   block triangular preconditioners  \cite{bai 26 2004,cao 57 2007,simonchini 49 2004,wu 48 2008}, parametrized block triangular preconditioners  \cite{jiang 216 2010},  Hermitian and skew-Hermitian splitting (HSS) preconditioners  \cite{bai 98 2004,  benzi 26 2004,  pan 172 2006}.

In \cite{bai 24 2003}, Bai et al. proposed the HSS iteration method to solve  non-Hermitian positive definite  linear systems $Ax=b$ which  converges unconditionally to the unique solution of the system. For a given initial guess $x^{0}$, the HSS iteration can be written as
\begin{align}
\label{hssbasic}
{\left\{ {\begin{array}{*{20}{c}}
\left( \alpha  I + H \right) x^{k+\frac{1}{2}} &\h=\h&\left( \alpha  I -  S \right)  x^{k}+b,
\\
 \left( \alpha  I + S \right) x^{k+1} &\h=\h&\left( \alpha  I - H \right)  x^{k+\frac{1}{2}}+b,
\end{array}} \right.} \quad k=0,1,2,\ldots,
\end{align}
where $\alpha>0$ and $A=H+S$, in which $H=(A+A^*)/2$ and $S=(A-A^*)/2$, where $A^*$ denotes the conjugate transpose of $A$.

Benzi and Golub in \cite{benzi 26 2004} have applied the HSS iteration method to the generalized saddle point problem (saddle point problems with nonzero $(2,2)$-block). As they  mentioned the convergence of the  method to solve the saddle point problem is typically too slow for the method to be competitive. For this reason they proposed using a nonsymmetric Krylov subspace method like the GMRES algorithm or its restarted version to accelerate the convergence of the iteration. Since the method has promising performance and elegant mathematical properties, it has attracted many researchers attention and many algorithmic variants and theoretical analysis of the HSS iteration for saddle point problems have been presented.  In \cite{bai 76 2007}, Bai et al. investigated the convergence properties of the  HSS iteration for the saddle point problem (\ref{saddle}) with $A$ being non-Hermitian and positive semidefinite. In \cite{benzi 61 2011}, Benzi and Guo proposed a dimensional split (DS) preconditioner for the Stokes and the linearized Navier-Stokes equations. The DS preconditioner is extracted from an HSS iteration method based on the dimensional splitting of ${\cal A}$.  A modification of the DS preconditioner has been presented by Cao et al. in \cite{YCaoDS2}. Benzi et al. have presented a relaxed version of DS in \cite{benzi 230 2011}. Some variants of the HSS preconditioner including its relaxed versions  have also been presented in the literature (see, e.g., \cite{Xie,FanZhengZhu,cao 31 2013}). In this paper, we present a new preconditioner which can be considered as a relaxed version of the HSS preconditioner for the saddle point problem.

Throughout the paper, for a matrix $X$, $\rho(X)$ and $X^*$ stand for the spectral radius and conjugate transpose of $X$, respectively. For a vector $x\in \Bbb{C}^n$, $\|x\|_2$ denotes the Euclidian norm of $x$. For a given matrix $A\in\Bbb{R}^{n\times n}$ and a vector $r\in\Bbb{R}^n$, the Krylov subspace $\mathcal{K}_m(A,r)$ is defined as $\mathcal{K}_m(A,r)=span\{r,Ar,\ldots,A^{m-1}r\}$.

This paper is organized as follows. In Section \ref{Sec2} we present our preconditioner. Some properties of the preconditioner are presented in Section \ref{Sec3}. Implementation of the proposed preconditioner is presented in Section \ref{Sec4}. Numerical experiments are given in Section \ref{Sec5}. The paper is ended by some concluding remarks in Section \ref{Sec6}.

\section{A review of the HSS preconditioner and its relaxed version}\label{Sec2}

In this section,  we first briefly review  the HSS iteration method and the induced HSS preconditioner for the saddle point problem. Then, a relaxed version of the HSS (RHSS) preconditioner, proposed by Cao et al.   in \cite{cao 31 2013}, is presented. Next, we give a new relaxed HSS (REHSS) preconditioner and investigate some of its properties.

\subsection{The HSS preconditioner for the saddle point problem}

According to the HSS iteration, the matrix $\cal A$ is split as
\[
\mathcal{A}=\mathcal{H}+\mathcal{S},\]
where
\[
\mathcal{H}=\frac{1}{2} \left( \mathcal{A}+\mathcal{A}^T\right)=\matt{A}{0}{0}{0}\quad \textrm{and} \quad
\mathcal{S}=\frac{1}{2} \left( \mathcal{A}-\mathcal{A}^T\right) = \matt{0}{B^T}{-B}{0}.
\]
Obviously, both of the matrices  $\alpha \mathcal I +\mathcal H$ and $\alpha \mathcal I + \mathcal S$ are nonsingular. In this case, the HSS iteration for the saddle point problem \eqref{saddle} is written as
\begin{align}
\label{hss}
{\left\{ {\begin{array}{*{20}{c}}
\left( \alpha  \mathcal I +\mathcal H \right) x^{k+\frac{1}{2}} &=&\left( \alpha \mathcal I - \mathcal S \right)  x^{k}+b,
\\
 \left( \alpha \mathcal I +\mathcal S \right) x^{k+1} &= &\left( \alpha \mathcal I -\mathcal H \right)  x^{k+\frac{1}{2}}+b.
\end{array}} \right.}
\end{align}
Computing $x^{k+\frac{1}{2}}$ from the first equation and substituting it in the second equation yields the iteration
\[
x^{k+1}={\Gamma} _{HSS} x^k+c,
\]
where
\[
 {\Gamma} _{HSS}=\left( \alpha \mathcal I + \mathcal S \right)^{-1}\left( \alpha \mathcal I - \mathcal H \right) \left( \alpha \mathcal I + \mathcal H \right)^{-1}\left( \alpha \mathcal I -\mathcal S \right),
\]
and
\[
c=2 \alpha\left( \alpha \mathcal I + \mathcal S \right)^{-1} \left( \alpha \mathcal I + \mathcal H \right)^{-1}b.
\]
It is known that  there is a unique splitting $\mathcal{A}={\mathcal{M}}_\alpha-{\mathcal{N}}_\alpha$, with ${\mathcal{M}}_\alpha$ being nonsingular, which induces the iteration matrix $ {\Gamma} _{HSS}$, i.e.,
\[
 {\Gamma} _{HSS}={\mathcal{M}}_\alpha^{-1} {\mathcal{N}}_\alpha=\mathcal I-{\mathcal{M}}_\alpha^{-1}\mathcal{A},
\]
where
\begin{align}
 { \mathcal{M}}_\alpha= \frac{1}{2\alpha}\left( \alpha \mathcal I + \mathcal H \right)\left( \alpha \mathcal I + \mathcal S \right), ~~
{\mathcal N}_\alpha=\frac{1}{2\alpha}\left( \alpha \mathcal I - \mathcal H \right)\left( \alpha \mathcal I -\mathcal S \right).
\end{align}

Benzi et al.  in \cite{benzi 26 2004} have shown  that for all $\alpha >0$, the HSS iteration is convergent unconditionally to the unique solution of the saddle point problem \eqref{saddle}.  As we know the HSS iteration serves the preconditioner ${\mathcal M}_\alpha$ for the system  \eqref{saddle} which is called the HSS preconditioner. Since the pre-factor $\frac{1}{2\alpha}$ in the HSS preconditioner ${\mathcal{M}}_\alpha$  has no effect on the preconditioned system, the HSS preconditioner can be written in the form
\begin{align}
\mathcal P_{HSS} & =\frac{1}{\alpha}\left( \alpha \mathcal I + \mathcal H \right)\left( \alpha \mathcal I + \mathcal S \right) =\frac{1}{\alpha} \matt{A+\alpha I}{0}{0}{\alpha I} \matt{\alpha I}{B^T}{-B}{\alpha I}\nonumber\\
&=\matt{A+\alpha I}{B^T+\frac{1}{\alpha}AB^T}{-B}{\alpha I}.
\end{align}
The difference between the HSS preconditioner $\mathcal P_{HSS}$  and the coefficient matrix $\mathcal A$ is
\begin{equation}\label{ErrHSS}
\mathcal{R}_{HSS}= \mathcal{P}_{HSS}-\mathcal{A}  =\matt{ \alpha I}{ \frac{1}{\alpha}AB^T}{0}{\alpha I}.
\end{equation}

\subsection{The RHSS preconditioner}

From Eq. \eqref{ErrHSS}, we see that as $\alpha$ tends to zero, the diagonal blocks tend to zero while the nonzero
off-diagonal block becomes unbounded. Hence, it is sought an appropriate  $\alpha$  to balance the weight of both parts. To do so,
Cao  et al. in \cite {cao 31 2013} consider the following relaxed HSS (RHSS) preconditioner  for the saddle point problem  \eqref{saddle}
\begin{equation}
\mathcal{P}_{RHSS}=\frac{1}{\alpha}\matt{A}{0}{0}{\alpha I}\matt{\alpha I}{B^T}{-B}{0}
=\matt{A}{\frac{1}{\alpha} AB^T}{-B}{0}.
\end{equation}
In this case, the difference between the RHSS preconditioner and the matrix $\mathcal A$ is given by
\begin{align}\label{ErrRHSS}
\mathcal{R}_{RHSS}=\mathcal{P}_{RHSS} -\mathcal{A}= \matt{0}{\left(\frac{1}{\alpha }A-I\right) B^T}{0}{0}.
\end{align}
Here, we see that as the parameter $\alpha$ tends to zero, the $(1,2)$-block of ${\mathcal R}_{RHSS}$ becomes unbounded.

From Eq. (\ref{ErrRHSS}), we have $\mathcal A=\mathcal{P}_{RHSS}-\mathcal{R}_{RHSS}$ which produces the RHSS iteration
\[
\mathcal{P}_{RHSS} x^{k+1}=\mathcal{R}_{RHSS} x^k+b, \quad k=0,1,\ldots,
\]
where $x^0$ is an initial guess.  Hence, the iteration matrix of the RHSS iteration is given by $\Gamma_{RHSS}=\mathcal{P}^{-1}_{RHSS}\mathcal{R}_{RHSS}$. In \cite {cao 31 2013}, it was shown that $\rho\left( \Gamma_{RHSS} \right) <1 $ for all $0<\alpha<\frac{2}{\mu_1}$ and the optimal value of $\alpha$ is
$\alpha _{opt}=2/(\mu_1+\mu_m)$, where $\mu_1$ and $\mu_m$ are, respectively,  the largest and smallest eigenvalues of the matrix $\left( BB^T\right)^{-1}\left( BA^{-1} B^T\right)$.


\section{The REHSS preconditioner} \label{Sec3}

As we mentioned when  $\alpha $ tends to zero, the $(1,2)$-block in both of the matrices $\mathcal{R}_{HSS}$ and  $\mathcal{R}_{RHSS}$ become unbounded. To overcome this problem we consider the following  splitting for the matrix $\mathcal A$ as
\begin{equation}\label{RHSSSplit}
\mathcal{A}=\mathcal{P}_{REHSS}-\mathcal{R}_{REHSS}=
\matt{A}{AB^T}{-B}{\alpha I}-
\matt{0}{(A-I)B^T}{0}{\alpha I},
\end{equation}
where $\alpha >0$. As $\alpha$ tends to zero the $(2,2)$-block of $\mathcal{R}_{REHSS}$ tends to zero and in contrast with the HSS and the RHSS preconditioners the $(1,2)$-block remains bounded. This means that, for small values of $\alpha$ the REHSS preconditioner should be closer to the coefficient matrix $\mathcal{A}$ than the HSS and the RHSS preconditioners.

From the REHSS splitting \eqref{RHSSSplit} we state the REHSS iteration as
\[
\matt{A}{AB^T}{-B}{\alpha I} u^{k+1}= \matt{0}{(A-I)B^T}{0}{\alpha I}u^{k}+\vect{f}{g},
\]
to solve the saddle point problem \eqref{saddle}. In this case, the iteration matrix of the REHSS iteration is given by
\begin{equation}
\label{gama}
\Gamma_{REHSS}= \mathcal{P}_{REHSS}^{-1}\mathcal{R}_{REHSS}  =
\matt{A}{AB^T}{-B}{\alpha I}^{-1}
\matt{0}{(A-I)B^T}{0}{\alpha I}.
\end{equation}
The next theorem provides a sufficient condition for the convergence of the REHSS iteration.


\begin{thm}
\label{th1}
Let $Q=B \left(  \frac{1}{2} A^{-1}- I \right)B^T$.  If $\delta=\lambda_{\max}(Q)$, then for every
$\alpha> \max \{\delta, 0\}$, it holds that  $\rho(\Gamma_{REHSS})<1$.
\end{thm}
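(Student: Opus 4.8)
The plan is to show $\rho(\Gamma_{REHSS})<1$ by analyzing the eigenvalues of $\Gamma_{REHSS}=\mathcal{P}_{REHSS}^{-1}\mathcal{R}_{REHSS}$.
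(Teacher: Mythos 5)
Your proposal is not a proof; it only names the object to be studied. Writing $\Gamma_{REHSS}=\mathcal{P}_{REHSS}^{-1}\mathcal{R}_{REHSS}$ and announcing that you will ``analyze its eigenvalues'' restates the definition of the spectral radius condition without supplying any of the analysis. Every substantive step is missing, and without them there is no way to see where the matrix $Q$ or the threshold $\max\{\delta,0\}$ comes from.

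Concretely, the argument you would need (and which the paper carries out) is the following. First, factor $\mathcal{P}_{REHSS}=\matt{A}{0}{0}{I}\matt{I}{B^T}{-B}{\alpha I}$ and invert it blockwise to show that
\[
\mathcal{P}_{REHSS}^{-1}\mathcal{A}=\matt{I}{\tilde A}{0}{\hat A},\qquad \hat A=\left(\alpha I+BB^T\right)^{-1}BA^{-1}B^T .
\]
Since $\Gamma_{REHSS}=I-\mathcal{P}_{REHSS}^{-1}\mathcal{A}$, its eigenvalues are $0$ (with multiplicity $n$) and $1-\mu$ where $\mu$ runs over the eigenvalues of the $m\times m$ matrix $\hat A$. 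Second, for an eigenvector $x$ of $\hat A$ with $\|x\|_2=1$ one has the Rayleigh-quotient expression $\mu=\dfrac{x^*BA^{-1}B^Tx}{\alpha+x^*BB^Tx}>0$ (using $B^Tx\neq 0$ because $B^T$ has full column rank), so $|1-\mu|<1$ is equivalent to $\mu<2$, i.e.\ to $\alpha>x^*B\left(\tfrac12 A^{-1}-I\right)B^Tx=x^*Qx$. Taking the maximum over unit vectors gives the sufficient condition $\alpha>\lambda_{\max}(Q)=\delta$, and combining with $\alpha>0$ yields $\alpha>\max\{\delta,0\}$. None of these steps — the block factorization, the reduction to the $m\times m$ Schur-type matrix $\hat A$, the positivity of $\mu$, or the equivalence $\mu<2\iff\alpha>x^*Qx$ — appears in your proposal, so as it stands the claim is unproved.
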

\begin{proof}
We have
\[
\mathcal{P}_{REHSS}=\mathcal{M}_1 \mathcal{M}_2=
\matt{A}{0}{0}{I} \matt{I}{B^T}{-B}{\alpha I},
\]
where
\[
\mathcal{M}_2=\matt{I}{B^T}{-B}{\alpha I}\\
=\matt{I}{0}{-B}{I}\matt{I}{0}{0}{\alpha I+BB^T}\matt{I}{B^T}{0}{I}.
\]
Therefore
\begin{align}
	\label{inv}
	\mathcal{P}_{REHSS}^{-1}&=\mathcal{M}_2^{-1}\mathcal{M}_1^{-1} \nonumber \\
	&=\matt{I}{-B^T}{0}{I}\matt{I}{0}{0}{\left(\alpha I+BB^T\right)^{-1}}\matt{I}{0}{B}{I}\matt{A^{-1}}{0}{0}{I}\\
	&=\matt{A^{-1}-B^TS^{-1}BA^{-1}}{-B^TS^{-1}}{S^{-1}BA^{-1}}{S^{-1}}, \nonumber
\end{align}
where $S=\alpha I+BB^T$. Hence, we get
\begin{eqnarray}
	\label{preeig}
	\mathcal{P}_{REHSS} ^{-1}\mathcal{A}=\matt{I}{\tilde{A} }{0}{\hat{A}},
\end{eqnarray}
where $ \tilde{A}=A^{-1}B^T-B^TS^{-1}BA^{-1}B^T $ and $\hat{A}=S^{-1}BA^{-1}B^T$. As a result, we obtain
\begin{align*}
\Gamma_{REHSS}&=\mathcal P_{REHSS}^{-1} \mathcal R_{REHSS}= \mathcal P_{REHSS}^{-1}\left(\mathcal  P_{REHSS}- \mathcal A\right)\\
 &=I-\mathcal P_{REHSS}^{-1}\mathcal A=\matt{0}{-\tilde{A}}{0}{I-\hat{A}}.
\end{align*}
Hence, if $\lambda $ is an eigenvalue of the matrix $\Gamma_{REHSS}$, then $ \lambda=0 $ or $ \lambda=1-\mu$, where $\mu$ is an eigenvalue of the matrix $\hat{A}$. Therefore, there exists a vector $x \neq 0$ such that
\[
\hat{A}x=\left(\alpha I+BB^T\right)^{-1}BA^{-1}B^T x  = \mu x.
\]
Without loss of generality, we assume that $\norm{x}_2=1$. Since $B^Tx \neq 0$, we have
\[
\mu=\frac{x^*BA^{-1}B^T x}{\alpha +x^*BB^T x }>0.
\]
Hence, $| \lambda |<1$ if and only if
\[
 \frac{ x^*BA^{-1}B^T x}{\alpha +x^*BB^T x }<2.
\]
which  is equivalent to
\begin{equation}\label{ALR}
\alpha> x^*B\left( \frac{1}{2} A^{-1}- I \right)B^Tx=x^*Qx.
\end{equation}
Therefore, a sufficient condition to have $|\lambda|<1$ is
\[
\alpha > \max _{\norm{x}_2=1}   x^*Qx =\lambda_{\max}(Q)=\delta.
\]
It is necessary to mention that the matrix $Q$ is symmetric and hence all of its eigenvalues are real.
\end{proof}

\begin{cor}
Assume that
\begin{equation}\label{Conconv}
\lambda_{\min}(A)> \frac{1}{2}\kappa(B)^2,
\end{equation}
where $\kappa(B)$ and  $\lambda_{\min}(A)$ stand for the spectral condition number and smallest eigenvalue of $A$. Then, for every $\alpha>0$, it holds that $\rho (\Gamma_{REHSS})<1$.
\begin{proof}
From  \cite[Theorem 1.22]{saad 2003} we have
\begin{eqnarray*}
x^*B A^{-1}B^T x &\h \leq \h& \lambda_{\max}(A^{-1}) x^*BB^Tx \leq \frac{1}{\lambda_{\min}(A)} \lambda_{\max}(BB^T) x^*x= \frac{\sigma_{\max}(B)^2}{\lambda_{\min}(A)},\\
x^*BB^Tx &\h \geq \h& \lambda_{\min}(BB^T)x^*x=\sigma_{\min}(B)^2,
\end{eqnarray*}
where $\sigma_{\min}(B)$ and $\sigma_{\max}(B)$ stand for the smallest and largest singular values of $B$. From these inequalities and  Eq. \eqref{ALR} we deduce
\[
x^*Qx=\frac{1}{2} x^*B  A^{-1}B^Tx- x^*BB^Tx \leq \frac{1}{2} \frac{\sigma_{\max}(B)^2}{\lambda_{\min}(A)}-\sigma_{\min}(B)^2=\theta.
\]
It follows from this equation that  $\delta\leq \theta$, where $\delta$ was defined in Theorem  \ref{th1}.  Hence, if $\alpha > \max\{0,\theta\}$, then the convergence of the REHSS iteration is achieved. Now, if $\theta<0$ then for every $\alpha>0$ we get $\rho (\Gamma_{REHSS})<1$. Obviously, $\theta<0$ is equivalent to the condition \eqref{Conconv}.
\end{proof}
\end{cor}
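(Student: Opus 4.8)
The plan is to reduce the statement to Theorem \ref{th1}. That theorem guarantees $\rho(\Gamma_{REHSS})<1$ whenever $\alpha>\max\{\delta,0\}$, where $\delta=\lambda_{\max}(Q)$ and $Q=B\left(\frac{1}{2}A^{-1}-I\right)B^T$. Consequently, if I can show that hypothesis \eqref{Conconv} forces $\delta<0$, then $\max\{\delta,0\}=0$ and convergence holds for \emph{every} $\alpha>0$, which is exactly the assertion. So the whole task collapses to producing a clean upper bound on $\delta$.

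First I would bound the Rayleigh quotient $x^*Qx$ uniformly over unit vectors $x$. Writing $x^*Qx=\frac{1}{2}x^*BA^{-1}B^Tx-x^*BB^Tx$ as in \eqref{ALR}, I estimate the two terms separately. For the (positive) first term, the standard Rayleigh-quotient inequality \cite[Theorem 1.22]{saad 2003} gives $x^*BA^{-1}B^Tx\le\lambda_{\max}(A^{-1})\,x^*BB^Tx=\sigma_{\max}(B)^2/\lambda_{\min}(A)$, using $\lambda_{\max}(A^{-1})=1/\lambda_{\min}(A)$ and $x^*BB^Tx\le\sigma_{\max}(B)^2$ for $\|x\|_2=1$. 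For the second term I instead want a lower bound, $x^*BB^Tx\ge\sigma_{\min}(B)^2$, which is valid because $rank(B)=m$ makes $BB^T$ symmetric positive definite with smallest eigenvalue $\sigma_{\min}(B)^2>0$.

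Combining the two estimates yields $x^*Qx\le\frac{1}{2}\sigma_{\max}(B)^2/\lambda_{\min}(A)-\sigma_{\min}(B)^2=:\theta$ for all unit $x$, whence $\delta=\max_{\|x\|_2=1}x^*Qx\le\theta$. The final step is the elementary equivalence $\theta<0\iff\lambda_{\min}(A)>\frac{1}{2}\,\sigma_{\max}(B)^2/\sigma_{\min}(B)^2=\frac{1}{2}\kappa(B)^2$, i.e. exactly \eqref{Conconv}. Under \eqref{Conconv} we therefore obtain $\delta\le\theta<0$, so Theorem \ref{th1} applies with $\max\{\delta,0\}=0$ and gives $\rho(\Gamma_{REHSS})<1$ for all $\alpha>0$.

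The individual steps are routine, so the only genuine care is in the bookkeeping of the bounds: one must pair an \emph{upper} bound on the first term with a \emph{lower} bound on the second term so that their difference is bounded \emph{above}, and it is worth noting that these two bounds need not be attained at the same $x$ --- this is harmless because I only need an upper estimate for the maximum. A secondary point is that $\kappa(B)=\sigma_{\max}(B)/\sigma_{\min}(B)$ must be read as the condition number of the rectangular full-row-rank matrix $B$, which is well defined precisely because $\sigma_{\min}(B)>0$.
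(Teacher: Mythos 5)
Your proposal is correct and follows essentially the same route as the paper: bounding the Rayleigh quotient of $Q$ term by term via the same inequalities to obtain $\delta\leq\theta$, and observing that \eqref{Conconv} is exactly $\theta<0$, so that Theorem \ref{th1} applies for every $\alpha>0$. No substantive difference from the paper's own argument.
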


The next theorem analyses the behavior of $\mathcal{P}_{REHSS}^{-1} \mathcal{A}$.

\begin{thm}
(a) For $\alpha >0$, the preconditioned matrix $\mathcal P_{REHSS}^{-1}\mathcal{A} $ has eigenvalue 1 of algebraic multiplicity at least $n$. The remaining eigenvalues are $\mu_i $, where  $\mu_i $ are the eigenvalues of the $m \times m$  matrix $\hat{A}=\left( \alpha I+BB^T\right)^{-1}BA^{-1}B^T $.\\
(b) Let $(\mu,[x;y])$ be an eigenpair of $\mathcal{P}_{REHSS}^{-1} \mathcal{A}$. Then, $x\neq 0$ and $\mu$ is either equal to $1$ or can be written
as  $\mu=(\alpha \hat b+\hat c)/\hat a$, where
\[
\hat a= x^*\left(\alpha I + B^T B \right) A  \left(\alpha I+ B^TB\right)x,\quad
\hat b=x^*\left(  B^T B \right)x,\quad \hat c=x^*\left(  B^T B \right)^2x.
\]
Moreover, when $\alpha\rightarrow 0$, then $\mu$ is either equal to $1$ or
\[
\frac{1}{\mu_{\max}(A)} \leqslant \mu \leqslant \frac{1}{\mu _{\min}(A)},
\]
where $\mu_{\min}(A)$ and $\mu_{\min}(A)$ are the smallest and largest eigenvalues of $A$, respectively.
\end{thm}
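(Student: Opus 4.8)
The plan is to build everything on the block upper–triangular form of the preconditioned matrix already produced in the proof of Theorem~\ref{th1}. For part~(a) I would simply invoke Eq.~\eqref{preeig}, which gives $\mathcal{P}_{REHSS}^{-1}\mathcal{A}=\matt{I}{\tilde{A}}{0}{\hat{A}}$. Since this matrix is block upper–triangular, its characteristic polynomial factors as $(\lambda-1)^{n}\det(\lambda I-\hat{A})$. Reading off the factors immediately yields the eigenvalue $1$ with algebraic multiplicity at least $n$ (exactly $n$, plus the number of unit eigenvalues of $\hat{A}$), while the remaining $m$ eigenvalues are precisely those of the $m\times m$ matrix $\hat{A}=(\alpha I+BB^{T})^{-1}BA^{-1}B^{T}$. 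No further work is needed here.

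For part~(b) I would abandon the triangular form and instead write the eigenvalue relation in generalized form $\mathcal{A}\vect{x}{y}=\mu\,\mathcal{P}_{REHSS}\vect{x}{y}$, then expand into its two block rows. Because both $\mathcal{A}$ and $\mathcal{P}_{REHSS}$ are nonsingular, $\mu\neq0$. To see $x\neq0$, note that if $x=0$ the second block row forces $\mu\alpha y=0$, hence $y=0$, contradicting $\vect{x}{y}\neq0$; the same second row shows that $\mu=1$ entails $y=0$, which is the first alternative of the statement.

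The core computation is the case $\mu\notin\{0,1\}$. Here I would solve the second block row for $y=\frac{\mu-1}{\mu\alpha}Bx$, substitute it into the first block row, divide by the nonzero factor $1-\mu$, and clear denominators to reach the reduced identity $\mu A(\alpha I+B^{T}B)x=B^{T}Bx$. Left–multiplying this by $x^{*}(\alpha I+B^{T}B)$ and using the symmetry of $\alpha I+B^{T}B$ collapses it to the scalar relation $\mu\hat{a}=\alpha\hat{b}+\hat{c}$, i.e. $\mu=(\alpha\hat{b}+\hat{c})/\hat{a}$. This step is legitimate because $\hat{a}=x^{*}(\alpha I+B^{T}B)A(\alpha I+B^{T}B)x>0$: indeed $\alpha I+B^{T}B$ is positive definite so $(\alpha I+B^{T}B)x\neq0$, and $A$ is symmetric positive definite.

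Finally, for the limiting bound I would first observe that when $\mu\neq1$ we necessarily have $Bx\neq0$, since $Bx=0$ substituted into $\mu A(\alpha I+B^{T}B)x=B^{T}Bx$ would give $\mu\alpha Ax=0$ and hence $x=0$. Setting $v=B^{T}Bx\neq0$ and letting $\alpha\to0$ in the formula gives $\mu\to\hat{c}/\hat{a}\big|_{\alpha=0}=\dfrac{v^{*}v}{v^{*}Av}$, and the Rayleigh–quotient inequalities $\mu_{\min}(A)\le v^{*}Av/(v^{*}v)\le\mu_{\max}(A)$ invert to the asserted $1/\mu_{\max}(A)\le\mu\le1/\mu_{\min}(A)$. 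The main obstacle I anticipate is making this limit fully rigorous, since the eigenpair $(\mu,x)$ itself depends on $\alpha$; I would either invoke continuity of eigenvalues to justify passing to the limit, or — more cleanly — argue directly on $\hat{A}(\alpha)=(\alpha I+BB^{T})^{-1}BA^{-1}B^{T}$, whose limit $(BB^{T})^{-1}BA^{-1}B^{T}$ has every eigenvalue of the form $\nu=q^{*}A^{-1}q/(q^{*}q)$ with $q=B^{T}p\neq0$, and this lands in $[1/\mu_{\max}(A),1/\mu_{\min}(A)]$ by the Rayleigh quotient for $A^{-1}$.
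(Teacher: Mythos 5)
Your proposal is correct and follows essentially the same route as the paper: part (a) read off from the block upper--triangular form \eqref{preeig}, and part (b) from the two block rows of $\mathcal{A}[x;y]=\mu\,\mathcal{P}_{REHSS}[x;y]$, eliminating $y$ and left--multiplying by $x^{*}(\alpha I+B^{T}B)$ to obtain the scalar formula, with the final bound coming from the Rayleigh quotient applied to $z=B^{T}Bx$. The only differences are cosmetic --- you divide by $\mu-1$ early to get a linear relation where the paper keeps and factors the quadratic $\hat a\mu^{2}-(\hat a+\alpha\hat b+\hat c)\mu+(\alpha\hat b+\hat c)=0$ --- and you are in fact somewhat more careful than the paper about justifying the $\alpha\to 0$ limit, where the eigenpair itself depends on $\alpha$.
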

\begin{proof}
Part (a) follows immediately from Eq. \eqref{preeig}. To prove (b), let $(\mu,[x;y])$ be an eigenpair of $\mathcal{P}_{REHSS}^{-1} \mathcal{A}$. Therefore,
\[
\mathcal{A}\vect{x}{y} =\mu \mathcal{P}_{REHSS}\vect{x}{y},
\]
which is equivalent to
\begin{align}
Ax+B^Ty&=\mu Ax+\mu AB^T y, \nonumber \\
-Bx&=-\mu B x+\mu\alpha y.\label{eq2}
\end{align}
Hence
\begin{align}
\label{eq3}
\left( \mu-1 \right)Ax+\left( \mu A-I \right) B^Ty=0,\\
\label{eq4}
\mu \alpha y=\left( \mu-1 \right) Bx.
\end{align}
Premultiplying both sides of \eqref{eq4} by $B^T$ yields
\begin{align}
\label{eqq5}
\mu \alpha B^Ty=\left( \mu-1 \right)B^TBx.
\end{align}
Multiplying both sides of \eqref{eq3}  by $\mu \alpha$ and substituting   \eqref{eqq5} in it, gives
\[\mu \alpha \left( \mu-1 \right) Ax+\left( \mu A-I \right) \left( \mu-1 \right)B^TBx=0.
\]
We show that $x\neq 0$. Otherwise,  from \eqref{eq2} we get $\mu=0$ or $y=0$. In fact, neither of them can be zero. So $x\neq0$. Without loss of generality, let $\norm{x}_2 =1$. Hence
\begin{align}
\label{eq6}
\mu^2 A  \left(\alpha I + B^TB\right)x -\mu  \left(A \left(\alpha I + B^T B\right)    + B^TB \right ) x+B^T B x=0.
\end{align}
Multiplying $x^*\left(\alpha I+ B^T B \right)$  to both sides of  \eqref{eq6}, yields
  \[
   \hat a \mu^2- \left(\hat a+\alpha \hat b + \hat c \right)\mu+ \left( \alpha\hat b+ \hat c\right)=0.
  \]
The roots of this quadratic equation are $\mu=1$ and
\[
\mu=\frac{\alpha \hat b+\hat c}{\hat a}= \frac{\alpha \hat b+\hat c}{\alpha^2 x^*Ax+ \alpha \left( x^* B^ T B Ax +x^* A   B^T Bx\right)  + x^* B^T B A B^T B x}.
\]

To prove  the last part of theorem, we show that if $Bx=0$, then $\mu=1$. If $Bx=0$, then it follows from Eq. \eqref{eq4} that $y=0$. Substituting this in Eq. \eqref{eq3}, yields $(\mu-1)Ax=0$. Now, since $Ax\neq 0$, we conclude that $\mu=1$.
Therefore, if $\alpha \rightarrow 0$, then $\mu =1$ or
\[
\mu =\frac{x^*\left( B^T B\right)^2 x}{x^* B^T B A B^T B x} = \frac{ (B^T Bx)^*(B^T Bx)}{  (B^TBx)^*  A   (B^T B x)}=\frac{z^*z}{z^*Az},
\]
where $z=B^TBx$. Since, $A$ is symmetric positive definite we have
\[
\frac{1}{\mu_{\max}(A)} \leqslant \frac{z^*z}{z^*Az} \leqslant \frac{1}{\mu _{\min}(A)},
\]
which completes the proof.
\end{proof}


\begin{thm}
The degree of the minimal polynomial of the preconditioned matrix $\mathcal{P}_{REHSS}^{-1} \mathcal{A}$
 is at most $m + 1$. Thus, the dimension of the
Krylov subspace $\mathcal{K}_n(\mathcal{P}_{REHSS}^{-1} \mathcal{A}, b)$ is at most $m + 1$.
\end{thm}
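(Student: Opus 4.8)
The plan is to exhibit a single polynomial of degree at most $m+1$ that annihilates the preconditioned matrix; its minimal polynomial must then divide that polynomial and hence have degree at most $m+1$. Write $\mathcal{T}=\mathcal{P}_{REHSS}^{-1}\mathcal{A}$. By Eq. \eqref{preeig} this matrix has the block upper triangular form
\[
\mathcal{T}=\matt{I}{\tilde{A}}{0}{\hat{A}},
\]
where $I$ is the $n\times n$ identity and $\hat{A}=\left(\alpha I+BB^T\right)^{-1}BA^{-1}B^T$ is $m\times m$; by the previous theorem its spectrum consists of the eigenvalue $1$ (of multiplicity at least $n$) together with the eigenvalues of $\hat A$.

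First I would let $q(\lambda)$ denote the minimal polynomial of the $m\times m$ block $\hat{A}$, so that $\deg q\le m$ and $q(\hat{A})=0$. I then set
\[
\phi(\lambda)=(\lambda-1)\,q(\lambda),
\]
a polynomial of degree at most $m+1$, and claim that $\phi(\mathcal{T})=0$. The factor $q$ is meant to kill the part of the spectrum coming from $\hat A$, while the factor $(\lambda-1)$ disposes of the remaining eigenvalue $1$.

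The key step is the block computation of $\phi(\mathcal{T})=(\mathcal{T}-I)\,q(\mathcal{T})$. Since $\mathcal{T}$ is block upper triangular, every power $\mathcal{T}^k$, and hence $q(\mathcal{T})$, is again block upper triangular, and the $(2,2)$-block of $\mathcal{T}^k$ equals $\hat{A}^k$; consequently the $(2,2)$-block of $q(\mathcal{T})$ is $q(\hat{A})=0$, so $q(\mathcal{T})$ has a vanishing bottom block-row. On the other hand
\[
\mathcal{T}-I=\matt{0}{\tilde{A}}{0}{\hat{A}-I}
\]
has a vanishing left block-column, so in the product $(\mathcal{T}-I)\,q(\mathcal{T})$ only the bottom block-row of $q(\mathcal{T})$ is actually multiplied by the nonzero entries of $\mathcal{T}-I$; since that row is zero, the product vanishes. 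This gives $\phi(\mathcal{T})=0$, whence the minimal polynomial of $\mathcal{T}$ divides $\phi$ and has degree at most $m+1$.

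Finally, for the Krylov subspace statement I would invoke the standard fact that $\dim\mathcal{K}_n(\mathcal{T},b)$ never exceeds the degree $d$ of the minimal polynomial of $\mathcal{T}$: the relation $\phi(\mathcal{T})=0$ lets one express $\mathcal{T}^{d}b$, and inductively every higher power of $\mathcal{T}$ applied to $b$, as a linear combination of $b,\mathcal{T}b,\dots,\mathcal{T}^{d-1}b$, so that $\mathcal{K}_n(\mathcal{T},b)\subseteq\mathrm{span}\{b,\mathcal{T}b,\dots,\mathcal{T}^{d-1}b\}$. With $d\le m+1$ this yields the asserted bound. The only delicate point in the whole argument is the block bookkeeping showing that $q(\mathcal{T})$ has zero bottom block-row; once that is established, the factor $(\lambda-1)$ removes the eigenvalue $1$ and the computation collapses immediately.
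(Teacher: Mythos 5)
Your proof is correct and follows essentially the same route as the paper: both exhibit an annihilating polynomial of the form $(\lambda-1)$ times a degree-$\le m$ polynomial that kills the $(2,2)$-block $\hat{A}$, and verify $\phi(\mathcal{T})=0$ by the same block-triangular computation. The only (cosmetic, and in fact slightly cleaner) difference is that you use the minimal polynomial of $\hat{A}$ where the paper uses $\prod_{i=1}^m(\lambda-\mu_i)$, whose vanishing at $\hat{A}$ really rests on Cayley--Hamilton rather than merely on each $\mu_i$ being an eigenvalue; your choice sidesteps that subtlety.
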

\begin{proof}
Let $\chi $ be the characteristic
polynomial of the preconditioned matrix $\mathcal{P}_{REHSS}^{-1} \mathcal{A}$. By using \eqref{preeig}, we have
\begin{align*}
\chi(x)=\left(x-1 \right)^{n}\prod_{i=1} ^m \left( x-\mu_i \right),
\end{align*}
where $\mu_i$,  for $i=1,\ldots,m$, are the eigenvalues of the matrix $\hat{A}$. Let
\[p(x)=(x-1) \prod_{i=1} ^m \left( x-\mu_i \right).\]
Therefore
\begin{align*}
p(\mathcal{P}_{REHSS}^{-1}\mathcal{ A})&=\left(\mathcal P_{REHSS}^{-1} \mathcal{A}-\mathcal I\right) \prod_{i=1} ^m \left( \mathcal {P}_{REHSS}^{-1} \mathcal{A}-\mu_i \mathcal I \right)\\
&=\matt{0}{\tilde{A}}{0}{\hat{A}-I_{m}}  \prod_{i=1} ^m\matt{(1-\mu_i)I_n}{\tilde{A}}{0}{\hat{A}-\mu_i I_m} \\
&=\matt{0}{\tilde{A} ~\displaystyle\prod_{i=1} ^m (\hat{A}-\mu_i I_m)}{0}{( \hat{A}-I_{m} ) \displaystyle\prod_{i=1}^m (\hat{A}-\mu_i I_m ) }.
\end{align*}
Since for $i=1,\ldots, m$, $\mu_i$ is an eigenvalue of the matrix $\hat{A}$, we have $$\displaystyle\prod_{i=1} ^m (\hat{A}-\mu_i I_m)=0$$ and so $p(\mathcal {P}_{REHSS}^{-1}\mathcal{ A})=0$. Therefore the degree of the minimal polynomial of the preconditioned matrix $\mathcal{P}_{REHSS}^{-1} \mathcal{A}$  is at most $m + 1$. Hence, by \cite[Proposition 6.1] {saad 2003}, the dimension of the Krylov subspace $\mathcal{K}_{n}(\mathcal {P}_{REHSS}^{-1} \mathcal{A}, b)$ is also at most $m + 1$.
\end{proof}

\section{Implementation of $P_{REHSS}$}\label{Sec4}

We use the restarted version of the GMRES (denoted  by GMRES($m$)) in conjunction with the preconditioner  $P_{REHSS}$ to solve the saddle point problem \eqref{saddle}. At each step of applying the preconditioner  $P_{REHSS}$ within the GMRES($m$) algorithm
we need to compute a vector of the form $z=P_{REHSS}^{-1}r$  for a given vector  $r=[r_1;r_2]$ where $r_1\in \Bbb{R}^n$ and $r_2\in\Bbb{R}^m$. Let $z=[z_1;z_2]$, where $z_1\in \Bbb{R}^n$ and $z_2\in\Bbb{R}^m$. Now, form Eq. \eqref{inv} we can compute
the vector $z$ via
\[
\vect{z_1}{z_2}=\matt{I}{-B^T}{0}{I}\matt{I}{0}{0}{\left(\alpha I+BB^T\right)^{-1}}\matt{I}{0}{B}{I}\matt{A^{-1}}{0}{0}{I}\vect{r_1}{r_2}.
\]
We can use Algorithm 1 to compute the vector $z$.
\begin{alg}
\label{algpre} Computation of $z=P_{REHSS}^{-1}r$.
\begin{enumerate}
\item Solve $Aw_1=r_1$ for $w_1$.
\item
Solve $( \alpha I+BB^T) w_2=Bw_1+r_2$ for $w_2$.
\item
$z_2:=w_2$.
\item
$z_1:=w_1-B^Tw_2$.
\end{enumerate}
\end{alg}

Both of the matrices $A$ and $\alpha I+BB^T$ are symmetric positive definite. Hence, we can solve the systems appeared in steps 1 and 2 of Algorithm \ref{algpre} exactly by the Cholesky factorization or approximately by the conjugate gadient (CG) or the  preconditioned conjugate gradient (PCG) iterative method.

\section{Numerical experiments}\label{Sec5}

In this section, we present some numerical experiments to illustrate the effectiveness of the
preconditioner $\mathcal P_{REHSS}$  for the saddle point problem \eqref{saddle}.
The restarted GMRES method \cite{saad 2003} with restarting frequency 30, i.e., GMRES (30), is appled to the left preconditioned saddle point problem \eqref{saddle} in conjunction with the preconditioner  $\mathcal P_{REHSS}$ and the corresponding numerical results are compared with those of the preconditioners  $\mathcal P_{HSS}$ and $\mathcal P_{RHSS}$ in terms of  iteration counts and CPU timings. All runs are performed in \textsc{Matlab} 2014 on an Intel core i7 (12G RAM) Windows 8 system.

In all the tests, the initial vector is set to be a zero vector and the right-hand side vector $b=[f; g] \in \mathbb{R}^{n+m}$ is chosen  such that the exact solution of the saddle point problem \eqref{saddle} is a vector of all ones. We use the stopping criterion
\[
 \norm{\mathcal P r_k}_2 \leqslant 10^{-12} \norm{ \mathcal P  b}_2,
\]
where $r_k = b-\mathcal Au_k$ is the residual at the $k$th iteration and $\mathcal P$ is one of the preconditioners $\mathcal P_{HSS}$, $\mathcal P_{RHSS}$ or $\mathcal P_{REHSS}$. The maximum number of the iterations and the maximum elapsed  CPU time are set to be $k_{\max} = 500$ and  $t_{\max}=3600s$, respectively.  Throughout this section,  ``IT" and ``CPU" stand for the numbers of the restarts in GMRES($m$) and the CPU time, respectively. In all the tables, a dagger ($\dag$) shows that the method has not converged in at most $k_{\max}$ iterations. Similarly, a $``\ddag"$ shows that the method has not converged after elapsing $t_{\max}$ seconds.  At each step of applying  the preconditioners $\mathcal P_{HSS}$, $\mathcal P_{RHSS}$, and $\mathcal P_{REHSS}$, we need to solve two sub-systems with symmetric positive definite   coefficients matrix  (see Algorithm \ref{algpre} and  \cite[Algorithm 3.3 and Algorithm 3.4]{cao 31 2013}) and  all of these systems are solved  by the Cholesky  factorization.

Consider the Stokes problem
(see  \cite{elman 33 2007} or \cite[page 221]{Elman})
\begin{equation}\label{stokes}
\left\{\begin{array}{ll}
-\triangle \textbf{u}+\nabla p=\textbf{f}, \\
\hspace{1.1cm}\nabla . \textbf{u}=0, \\
\end{array}\right.
\end{equation}
in $\Omega=[-1,1] \times [-1,1]$, where $\Delta$, $\nabla$,  $\mathbf {u}$, and  $\mathbf p$ stand for the Laplace operator, the gradient operator, velocity and pressure of the fluid, respectively, with suitable boundary condition on $\partial \Omega$. It is known that many discretization schemes for \eqref{stokes} will lead to
saddle point problems of the form \eqref{saddle}.
 We consider Q2-P1 finite element discretizations on uniform grids on the unit square of the  tree  standard model problems (see \cite{elman 33 2007,Elman})
 \begin{enumerate}
  \item  The leaky lid-driven cavity problem;
  \item  The channel domain problem;
  \item  The  colliding flow problem.
 \end{enumerate}

We use the IFISS software package developed by Elman et al. \cite{elman 33 2007} to generate the linear systems  corresponding to $16 \times 16$, $32 \times 32$, $64 \times  64$, $128 \times 128$, and $256 \times 256$ meshes.  The IFISS software provides the matrices \verb"Ast" and \verb"Bst" for the matrices $A$ and $B$, respectively. For the channel domain problem the matrix $\verb"Bst"$ is of full rank, but for the colliding flow  and lid driven cavity problem is rank deficient. Therefore, in these cases  we drop two first rows of \verb"Bst" to get a full rank matrix.
Generic information of the test problems, including $n$, $m$, $nnz(A)$ and $nnz(B)$, are given in Table \ref{Tablesize} where $nnz$ stand for the number of the nonzero entries of a matrix. We present the numerical results for different values of $\alpha$ ($\alpha=10^{-6},10^{-4},10^{-2},1,10^{2}$).

\begin{table}[!t]
\begin{center}
 \caption{The size of the matrices $A$ and $B$ for different of   grids. }
 \label{Tablesize}
\scriptsize
\begin{tabular}
{cccccccccccccccccccccccccccccc}\\ \hline
&    \multicolumn{4}{c}{channel domain problem} &&  \multicolumn{4}{c}{lid driven cavity and cooliding flow problem}  \\
\cline{2-5} \cline{7-10} \\
 {Grid}  & $n  $&   $ m$   &$nnz(A)$&$ nnz(B)$&& $n$  &   $ m$   &$nnz(A)$& $nnz(B)$\\ \hline
{$16 \times 16 $}& 578 &192 & 6698  &  2084&  & 578& 190 & 6178 & 1967\\
{$32 \times 32$}&   2178   &768  & 29546 & 10142 & & 2178& 766  &   28418 & 9868\\
{$64 \times 64 $}&  8450 & 3072 &  124550 & 45062 & & 8450 & 3070  &  122206 & 44516 \\
{$128 \times 128 $}& 33282   &  12288  &    511152    &  192174&& 33282  &  12286     & 506376    &  191084 \\
{$256 \times 256 $}& 132098   &    49152 &    2070764  &    791738&& 132098  &  49150&     2061140   &   789560\\
         \hline
\end{tabular}\end{center}
\end{table}

Numerical results for the leaky lid-driven cavity,  the channel domain , and the  colliding flow problems are, respectively, presented in Tables \ref{tabliddriven}, \ref{tabchannel} and \ref{tabcolliding}. In all the tables ``IT" stands for the number of restarts in the GMRES(30) algorithm and ``CPU" denotes the elapsed CPU time for the convergence. As the numerical results show almost for all the three test problems the preconditioner  $\mathcal{P}_{REHSS}$  is more effective than the preconditioners $\mathcal {P}_{HSS}$ and $\mathcal {P}_{RHSS}$ in terms of the iteration counts and CPU time. The exceptions are the test problems with $\alpha=10^{-4}$ and $r=5,6$ (see Tables \ref{tabliddriven}, \ref{tabchannel} and \ref{tabcolliding}) where the results of the $P_{RHSS}$ preconditioner are slightly better than those of the $P_{REHSS}$ preconditioner. As we see, the GMRES(30) method for the preconditioned system with preconditioner $\mathcal{P}_{REHSS}$ always converges, whereas it does not converge for other two preconditioners. Another observation which can be posed here is that, despite preconditioners $\mathcal {P}_{HSS}$ and $\mathcal {P}_{RHSS}$, the behavior of the preconditioned iteration corresponding to the preconditioner $\mathcal{P}_{REHSS}$ is not very sensitive to the choice of $\alpha$.

\begin{table}[!h]
\begin{center}
 \caption{Numerical results lid driven cavity problem  on   $2^r \times 2^r$ grid. }\label{tabliddriven}
\scriptsize
\begin{tabular}
{cccccccccccccccccccc}\\ \hline
{}  & {} &    \multicolumn{2}{c}{$\alpha=10^{-4}$} & &  \multicolumn{2}{c}{$\alpha=10^{-2}$}  & & \multicolumn{2}{c}{$\alpha=1 $}  &  &\multicolumn{2}{c}{$\alpha=10^2$} \\
\cline{3-4} \cline{6-7}  \cline{9-10}  \cline{12-13} \\
{$r$} & {Preconditioner}   & {IT}            & {CPU}&& {IT}    & {CPU}&& {IT}    & {CPU}&& {IT}    & {CPU} \\ \hline
    &$\mathcal P_{HSS}$  &   4&    0.04&&  5&  0.06&& 13& 0.16&&106&    1.46\\
$4 $  &$\mathcal P_{RHSS}$  &  3&    0.02&&  3& 0.02&&  3& 0.02&&  4&    0.04\\
     &$\mathcal P_{REHSS}$  &  3&    0.02&&  3& 0.02&&  3& 0.02&&  3&    0.02  \\
        \hline
   &$\mathcal P_{HSS}$  &  8&    0.37&&  9& 0.45&&144& 8.45&&$\dag$&   -\\
$5 $  &$\mathcal P_{RHSS}$  &  5&    0.19&&  5& 0.20&&  5& 0.21&&  9&    0.43\\
   &$\mathcal P_{REHSS}$  &5&    0.21&&  4& 0.12&&  3& 0.07&&  3&    0.07\\
        \hline
   &$\mathcal P_{HSS}$  & 14&    3.87&& 47& 14.21&&$\dag$& -&&$\dag$&   -\\
$6 $  &$\mathcal P_{RHSS}$  &8&    1.97&&  8&  2.03&&  9&  2.29&& 27&     8.10\\
   &$ P_{REHSS}$  & 11&    3.08&&  3& 0.47&&  3& 0.42&&  3&    0.39\\
        \hline
     &$\mathcal P_{HSS}$  & 38&   76.32&&$\dag$&-&&$\dag$& -&&$\dag$& -\\
$7 $  &$\mathcal P_{RHSS}$  & 15&   30.85&& 14&  25.95&& 17&  33.46&& 79&  160.91\\
    &$\mathcal P_{REHSS}$  & 9&    17.30&&  3&  3.86&&  3&  3.69&&  3&    3.86\\
        \hline
 &$\mathcal P_{HSS}$  &115& 2064.44&&- &  $\ddag$&& -&   $\ddag$&& -&  $\ddag$\\
$8 $  &$\mathcal P_{RHSS}$  &37&   641.50&& 28&  471.97&& 38& 648.74&&-&  $\ddag$\\
  &$\mathcal P_{REHSS}$  &5&   67.63&&  3&  33.11&&  3&  31.91&&  3&   27.43\\
        \hline \\[1mm]
\end{tabular}
\end{center}
\end{table}

\begin{table}[!h]
\begin{center}
 \caption{Numerical results for channel domain problem on $2^r \times 2^r$ grid.
 }\label{tabchannel}
\scriptsize
\begin{tabular}
{ccccccccccccccccccccc}\\ \hline
{}  & {}  &   \multicolumn{2}{c}{$\alpha=10^{-4}$} & &  \multicolumn{2}{c}{$\alpha=10^{-2}$}  & & \multicolumn{2}{c}{$\alpha=1 $}  &  &\multicolumn{2}{c}{$\alpha=10^2$} \\
\cline{3-4} \cline{6-7}  \cline{9-10} \cline{12-13} \\
{$r$} & {Preconditioner}   & {IT} & {CPU}&& {IT}    & {CPU}&& {IT}    & {CPU}&& {IT}    & {CPU} \\ \hline
       &$\mathcal P_{HSS}$   &  5& 0.05&&  6&   0.06&&  7&  0.07&& 17& 0.21\\
$4 $  &$\mathcal P_{RHSS}$ &3& 0.02&& 3&   0.03&&  3&  0.02&&   4& 0.03\\
        &$\mathcal P_{REHSS} $ &3&0.02&&  3&   0.02&&  3&  0.02&&  3& 0.02\\
        \hline
       &$\mathcal P_{HSS}$ & 9& 0.48  &&10  & 0.50&&13&  0.72&&47& 2.84 \\
$5$  &$\mathcal P_{RHSS}$ & 5&  0.22&&5&  0.22&& 5&   0.20&& 9&0.47\\
       &$\mathcal P_{REHSS} $ &  5& 0.21&&3&  0.09&& 3&  0.07&& 3& 0.06\\
        \hline
       &$\mathcal P_{HSS}$ &21&5.94&& 13&  3.46&&28&  8.39&& 498 & 154.59\\
$6$  &$\mathcal P_{RHSS}$ & 8& 2.08&& 8&  2.06&& 8&  1.96&&21&6.15\\
       &$\mathcal P_{REHSS}$ & 6&1.45&&3&   0.40&& 3&   0.40&& 3&0.38\\
        \hline
      &$\mathcal P_{HSS}$ &48&   96.57&&15&  28.06&& 83& 169.45 &&320&  1089.81\\
$7$  &$\mathcal P_{RHSS}$  &  17& 32.79 && 16&  30.53&& 15&  28.83 && 86 &178.10\\
      &$\mathcal P_{REHSS}$ & 5& 7.45&& 3&  2.84&& 3&  2.65&& 3 & 2.52\\
       \hline
      &$\mathcal P_{HSS}$  &169&3093.67&& 20&336.76&&-&$\ddag$&& - &   $\ddag$\\
$8$  &$\mathcal P_{RHSS}$  &  42&  758.74&& 37 & 654.38 && 34& 603.85&&
 -&  $\ddag$\\
   &$\mathcal P_{REHSS}$  &  4&40.52&& 3& 24.29&& 3& 22.11&& 2& 16.03 \\         \hline
\end{tabular}
\end{center}
\end{table}

\begin{table}[!h]
\begin{center}
 \caption{Numerical results for the colliding flow problem on  $2^r \times 2^r$ grid.
 }\label{tabcolliding}
\scriptsize
\begin{tabular}
{cccccccccccccccccccccc}\\ \hline \\
{}  & {}  &   \multicolumn{2}{c}{$\alpha=10^{-4}$} & &  \multicolumn{2}{c}{$\alpha=10^{-2}$}  & & \multicolumn{2}{c}{$\alpha=1 $}  &  &\multicolumn{2}{c}{$\alpha=10^2$} \\
\cline{3-4} \cline{6-7}  \cline{9-10} \cline{12-13}  \\
{$r$} & {Preconditioner} &{IT}& {CPU}& & {IT}    & {CPU}&& {IT}    & {CPU}&& {IT}    & {CPU} \\ \hline
      &$\mathcal P_{HSS}$ & 4& 0.04&&   5& 0.05&&    13&  0.16&&   90&   1.17\\
$4 $  &$\mathcal P_{RHSS}$ &3& 0.02&&    3& 0.02&&    3&  0.02&&  4&   0.03\\
      &$\mathcal P_{REHSS}$ &3& 0.02&&     3& 0.02&&     3&  0.02&&  3&   0.02\\ \hline
        &$\mathcal P_{HSS}$ &8& 0.37&&  9& 0.43&&  115&   7.10&&  $\dag$&  -\\
$5 $  &$\mathcal P_{RHSS}$&5&  0.20&&   5& 0.22&&   5&  0.19&&   9&   0.44\\
        &$\mathcal P_{REHSS}$ &5& 0.21&&     4& 0.13&&     3&  0.09&&    3&   0.08\\ \hline
      &$\mathcal P_{HSS}$ &14& 3.91&&  53&16.24&&  $\dag$& -&&  $\dag$& -\\
  $6 $&$\mathcal P_{RHSS}$&8 & 2.01&&   8& 2.05&&   8&  2.21&&   28&   8.55\\
        &$\mathcal P_{REHSS}$  &11& 3.15&&     3& 0.48&&  3&  0.43&&  3&    0.4\\ \hline
        &$\mathcal P_{HSS}$  &39& 87.20&&   $\dag$&-&&  $\dag$&-&&   $\dag$& -\\
$7 $   &$\mathcal P_{RHSS}$ &15&29.54&&  14&27.68&&  17& 35.07&&  111& 250.86\\
        &$\mathcal P_{REHSS}$  &9&16.61&&  3& 4.11&&  3&  3.81&&  3&    4.4\\ \hline
        &$\mathcal P_{HSS}$  &123& 2222.41&&  -&  $\ddag$ &&- &$\ddag$&&-& $\ddag$\\
$8 $  &$\mathcal P_{RHSS}$&39& 700.77&&   31&  548.88&&   38& 690.25&&   -& $\ddag$\\
        &$\mathcal P_{REHSS}$ & 5&70.94&&   3& 35.25&&    3& 33.79&&     3&  27.73\\ \hline
\end{tabular}
\end{center}
\end{table}

In Figure 1, the eigenvalues distribution of the matrices $\mathcal{A}$ and  the preconditioned matrix $\mathcal{P}_{REHSS}^{-1} \mathcal{A}$ for the cavity problem on $32\times 32$ grid, with different  values of $\alpha$ ( $\alpha=0.1$,  $\alpha=1$, and $\alpha=10$) are displayed.  We  see that the eigenvalues of preconditioned matrices are well-clustered.

\begin{figure}[!h]
\begin{center}
\includegraphics[height=.4\paperheight]{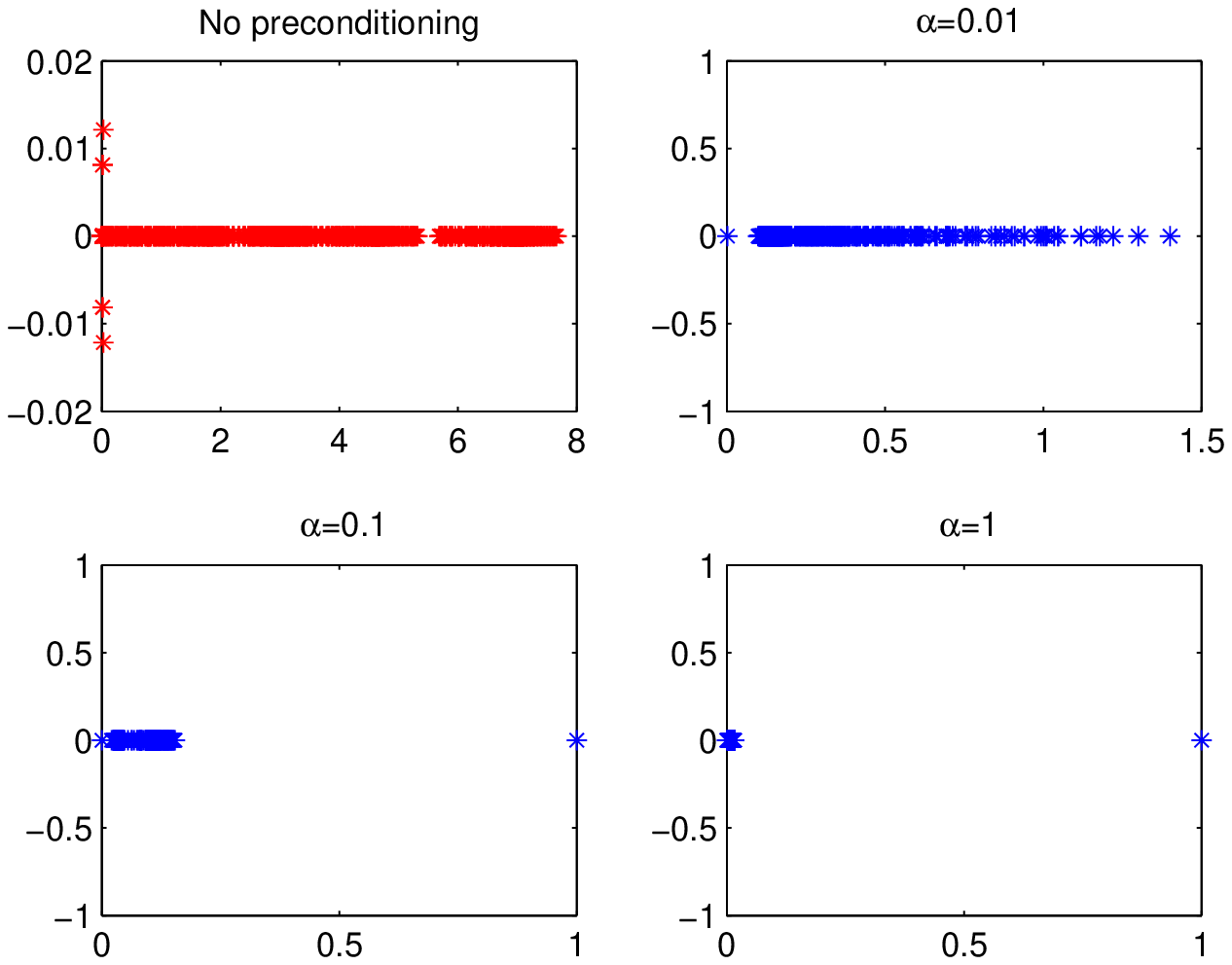}
\caption{Eigenvalues distribution of the matrices $\mathcal A$ and $\mathcal P_{REHSS}^{-1}\mathcal{A}$  for the cavity problem on $32\times 32$ grid with different  values of $\alpha$ ($\alpha=0.01,0.1,1$). }
\end{center}
\label{FigEigDis}
\end{figure}

In Figure 2,  the number of  iterations and the CPU time of  GMRES(30)  for solving the preconditioned system with the preconditioners  $\mathcal{P}_{REHSS}$, $\mathcal {P}_{HSS}$ and $\mathcal {P}_{RHSS}$ for the channel domain problem with $128\times 128$ grid for different  values of $\alpha$ are presented. As we see, for this example  the $\mathcal{P}_{REHSS}$ is superior to the preconditioners $\mathcal {P}_{HSS}$ and $\mathcal {P}_{RHSS}$, in terms of the iterations count and the CPU time.

\begin{figure}[!ht]
\begin{center}
\includegraphics[height=.35\paperheight]{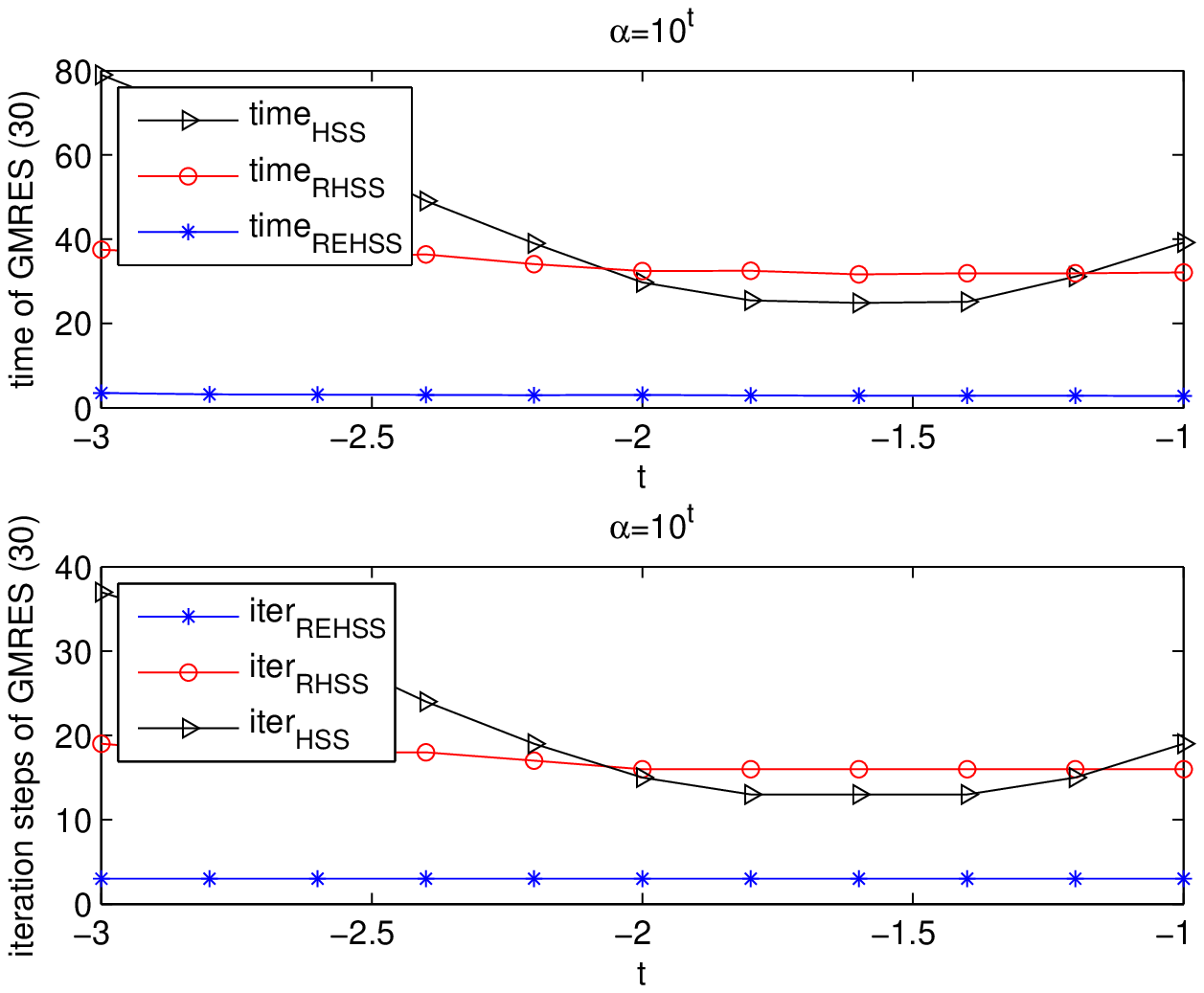}
\caption{Number of iterations and CPU time with respect to $t=\log_{10} \alpha$ for the channel domain problem with $128\times 128$ grid. }
\end{center}
\end{figure}

\section{Concluding remarks}\label{Sec6}

We have presented a new relaxed version of the Hermitian and skew-Hermitian splitting preconditioner say REHSS for the saddle point problem (\ref{saddle}). Some properties of the preconditioner have been presented. From numerical point of  view the proposed preconditioner has been compared with two recently proposed preconditioners HSS and RHSS. Numerical results showed that the REHSS preconditioner is in general superior to the HSS and RHSS preconditioners. Moreover, the REHSS preconditioner is not very sensitive to the involving parameter.

\section*{Acknowledgements}

This work is supported by the Iran National Science Foundation (INSF) under Grant No. 93050251.
The work is also partially supported by University of Guilan.  The authors are grateful to the anonymous reviewers for their useful comments.

\section*{References}

\end{document}